 \numberwithin{equation}{section}
 \theoremstyle{plain}
 \newtheorem{theorem}{Theorem}[section]
   \theoremstyle{definition}
 \def\T{ \mathbb T}
 \def\R{ \mathbb R}
 \def\D{{ \mathbb D}}
 \def\C{{ \mathbb C}}
 \def\dis{\displaystyle}
 \def\Union{\bigcup}
 \def\ov{\overline}
 \def\buildrel#1_#2^#3{\mathrel{\mathop{\kern 0pt#1}\limits_{#2}^{#3}}}
\begin{document}

 \title [Pseudohyperbolic geometry]{On a family of pseudohyperbolic disks}


 \author{Raymond Mortini}
  \address{
Universit\'{e} de Lorraine\\
 D\'{e}partement de Math\'{e}matiques et  
Institut \'Elie Cartan de Lorraine,  UMR 7502\\
 Ile du Saulcy\\
 F-57045 Metz, France} 
 \email{raymond.mortini@univ-lorraine.fr}

 \author{Rudolf Rupp}
\address{ Fakult\"at f\"ur Angewandte Mathematik, Physik  und Allgemeinwissenschaften\\
\small TH-N\"urnberg\\
\small Kesslerplatz 12\\
\small D-90489 N\"urnberg, Germany
}
\email  {Rudolf.Rupp@th-nuernberg.de} 

 \keywords{}
 
 \begin{abstract}
 In der Funktionentheorie der Einheitskreissscheibe $\D$ spielt die hyperbolische Geometrie
 eine zentrale Rolle.  Bekannntlich sind wegen des Lemmas von Schwarz-Pick die holomorphen Isometrien bez\"uglich dieser Geometrie nichts anderes als die konformen Selbstabbildungen von $\D$.
 Wir interessieren uns f\"ur die Schar der Scheiben $D_\rho(x,r)$ mit festem Radius $r$ und 
  $-1<x<1$  bez\"uglich der pseudohyperbolischen Metrik $\rho$ in $\D$, und bestimmen mittels funktionentheoretischer Mitteln explizit deren Einh\"ullende.  
  \end{abstract}

  \maketitle

 \centerline {\small\the\day.\the \month.\the\year} \medskip
 \section{Introduction}

 {\sl Hyperbolic geometry \footnote{The slanted text  stems from \cite{cfkp}}
  was created in the first half of the nineteenth century 
in the midst of attempts to understand Euclid's axiomatic basis for geometry.}
 The mathematicians at that time were mainly driven by the question 
 whether the parallel axiom was redundant or not. It turned out that it was not.
{\sl  Hyperbolic geometry is now one type of non-Euclidean geometry
 that  discards the parallel axiom. 
 Einstein and Minkowski found in non-Euclidean geometry a
 geometric basis for the understanding of physical time and space. These
 negatively curved geometries, of which
hyperbolic non-Euclidean geometry is the prototype, are the generic forms of geometry.
They have profound applications to the study of complex variables, to
the topology of two- and three-dimensional manifolds, to group theory, 
 to physics, and to other disparate fields of mathematics. }  Outside mathematics, hyperbolic tesselations of the unit disk have been rendered  very popular by the artist  M.C. Escher.  
 A nice introduction into hyperbolic geometry is, for example,  given in  the monograph 
 \cite{an} and  in \cite{cfkp}.\\
 
 \begin{figure}[h!]
   \hspace{3.7cm}
   \scalebox{.40} {\includegraphics{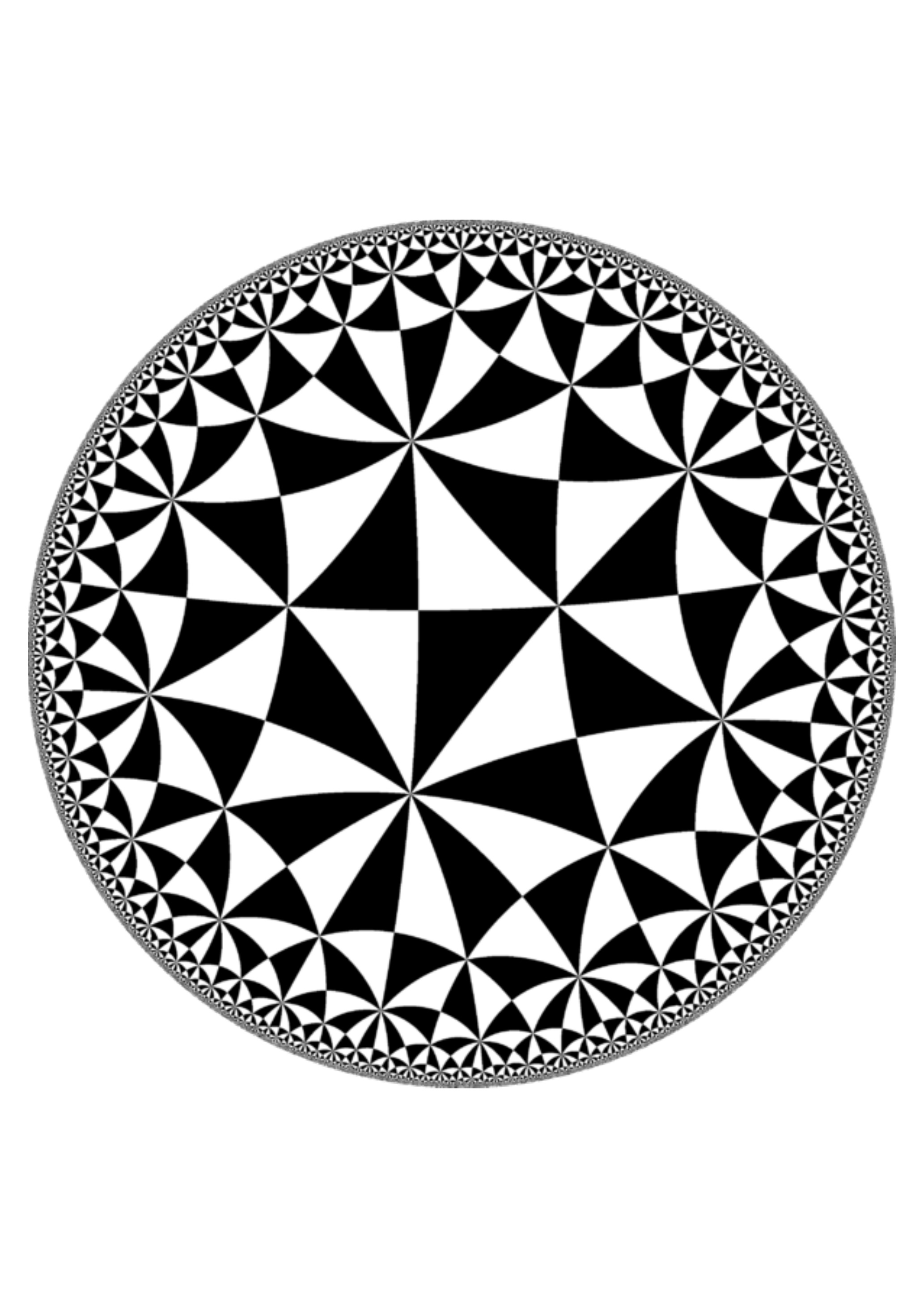}} 
\caption{\label{tiling} Tilings of the Poincar\'e disk \cite{ut}}
\end{figure}

 In our note we are interested in the Poincar\'e disk model.
 So let $\D=\{z\in \C:|z|<1\}$ be  the unit disk in the (complex) plane which we identify with
 $\R^2$. The lines/geodesics with respect to the hyperbolic geometry in this model
  are  arcs of Euclidean circles in $\D$ that
 are orthogonal to the  unit circle $\T:=\{z\in \C:|z|=1\}$ of $\D$
 (see figure \ref{parallel-f}). Given a line  $C$ in the hyperbolic geometry and a  point 
 $a\in \D$ not belonging to $C$,  there are infinitely many hyperbolic lines parallel to $C$
 (in other words disjoint from $C$) and passing through $a$ (see figure \ref{parallel-f}).
 The hyperbolic distance $P(a,b)$  of two points $a$ and $b$ is the hyperbolic length of the
 associated  geodesic and  is therefore given by the 
 integral $L(\gamma):= \int_\gamma \frac{2\;|dz|}{1-|z|^2}$ 
 over the unique circular arc $\gamma$ passing 
 through $a$ and $b$ and orthogonal to $\T$.  Note that $L(\gamma)=\inf L(\Gamma)$,
 where $\Gamma$ is any smooth curve joining $a$ with $b$.
 Or if one prefers a nice formula: 
 $$ \frac{|a-b|^2}{(1-|a|^2)(1-|b|^2)}= \frac{1}{2}\left( \frac{e^{P(a,b)}+e^{-P(a,b)}}{2}-1\right).$$
  
 \begin{figure}[h!]
   \hspace{3.5cm}
   \scalebox{.50} {\includegraphics{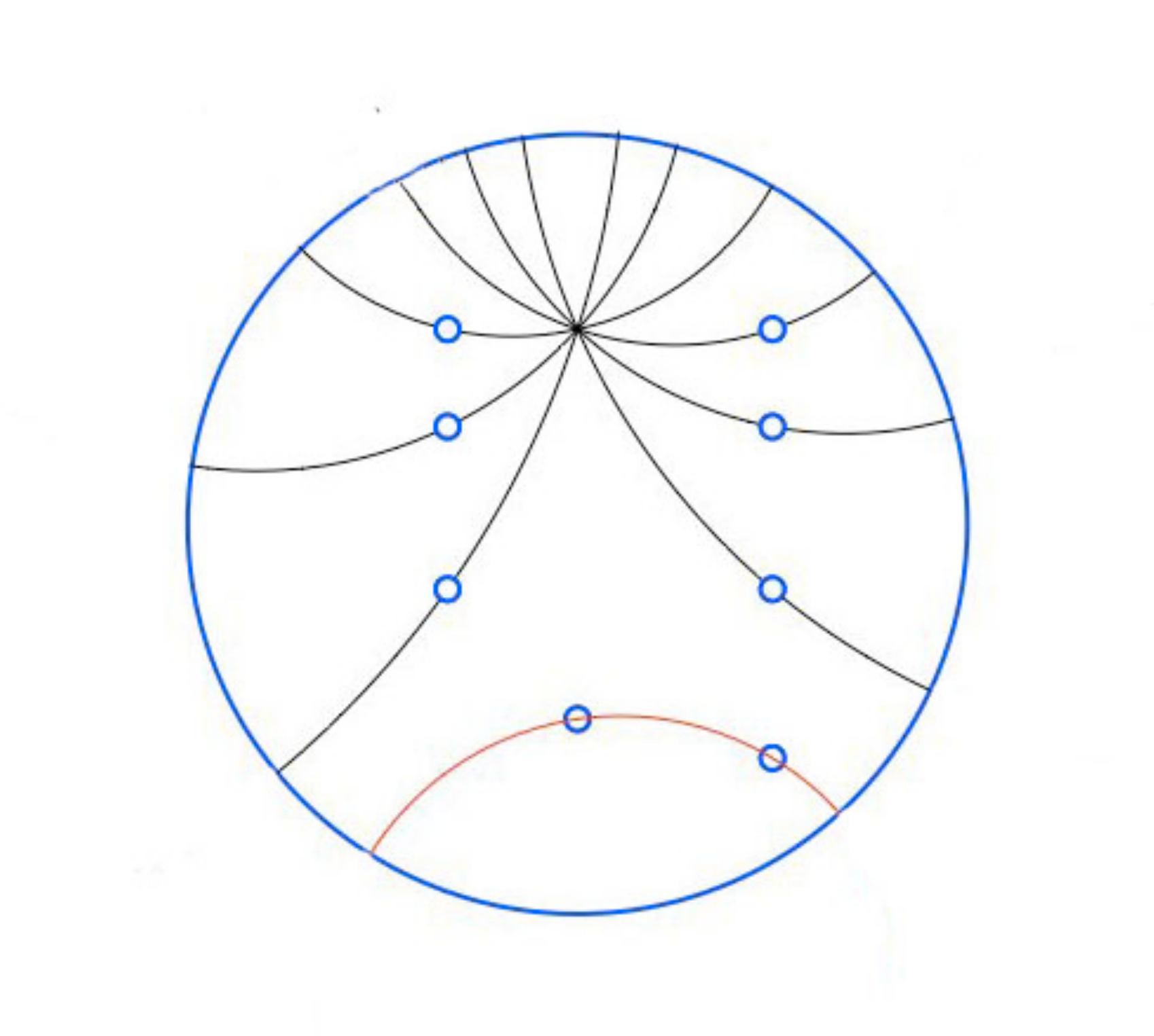}} 
\caption{\label{parallel-f} Infinitely many lines parallel to the red line and 
 passing through the black point }
\end{figure}
  Let
 $$\rho(a,b)=\tanh \left(\frac{1}{2}P(a,b)\right).$$ \noindent Then $\rho(a,b)$ is called the pseudohyperbolic distance  of the two points $a,b$ and is
 given by 
 $$\rho(a,b):=\left|\frac{a-b}{1-\ov a b}\right|.$$
 In other words, 
 $$P(a,b)=\log\frac{1+\rho(a,b)}{1-\rho(a,b)}. $$
 
 It is this pseudohyperbolic distance that we will work with, 
 because this seems to be the most suitable for 
 function theoretic aspects.

 \section{Function theoretic tools}

Given $a\in \D$ and $0<r<1$,  let
$$D_\rho(a,r)=\{z\in \D: \rho(z,a)<r\}$$
be the pseudohyperbolic disk centered at $a$ and with radius $r$.
 It is a simple computational exercise in complex analysis,
 that $D_\rho(a,r)$  coincides with the Euclidean disk $D(p,R)$ where
$$ \mbox{$\dis p=\frac{1-r^2}{1-r^2|a|^2}\; a$ and $\dis R= \frac{1-|a|^2}{1-r^2|a|^2}\; r$}.$$

An  important feature of the hyperbolic metric within function theory comes from
the Schwarz-Pick lemma which  tells us that the holomorphic isometries with respect to $\rho$ (or $P$)
are exactly the conformal self-mappings of the disk: 

\begin{theorem}[Schwarz-Pick Lemma]\label{spl}
Let $f:\D\to\D$ be holomorphic. Then, for every $z,w\in\D$,
$$\rho(f(z),f(w))\leq \rho(z,w),$$
with equality at a pair $(z,w)$, $z\not=w$,  if and only if 
$$f(z)=e^{i\theta}\frac{a-z}{1-\ov a z}$$
for some $a\in \D$ and $\theta\in [0,2\pi[$.
\end{theorem}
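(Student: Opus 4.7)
\medskip

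\noindent\textbf{Proof proposal.} The plan is to reduce the statement to the classical Schwarz lemma (for holomorphic self-maps of $\D$ fixing $0$) by pre- and post-composing $f$ with the involutive Möbius automorphisms
$$\varphi_a(\zeta)=\frac{a-\zeta}{1-\ov a\,\zeta},\qquad a\in\D,$$
which satisfy $\varphi_a(a)=0$, $\varphi_a\circ\varphi_a=\mathrm{id}_\D$, and $|\varphi_a(\zeta)|=\rho(\zeta,a)$ directly from the definition of $\rho$ given in the introduction.

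Fix $z,w\in\D$ and set
$$g:=\varphi_{f(z)}\circ f\circ\varphi_z:\D\to\D.$$
This is a holomorphic self-map of $\D$ with $g(0)=\varphi_{f(z)}(f(z))=0$, so the classical Schwarz lemma yields $|g(\zeta)|\leq|\zeta|$ for all $\zeta\in\D$, with equality at some $\zeta_0\neq 0$ only when $g(\zeta)=e^{i\theta}\zeta$ for a suitable $\theta\in[0,2\pi[$. Evaluating at $\zeta=\varphi_z(w)$ and using the involution property $\varphi_z(\varphi_z(w))=w$, one obtains
$$g(\varphi_z(w))=\varphi_{f(z)}(f(w)),$$
so the Schwarz estimate becomes $|\varphi_{f(z)}(f(w))|\leq|\varphi_z(w)|$, which is exactly $\rho(f(z),f(w))\leq\rho(z,w)$.

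For the equality case, if $\rho(f(z),f(w))=\rho(z,w)$ at some pair with $z\neq w$, then $\zeta_0:=\varphi_z(w)\neq 0$ realises equality in the Schwarz lemma for $g$, forcing $g(\zeta)=e^{i\theta}\zeta$. Unwinding the definition of $g$ via the involution property gives
$$f=\varphi_{f(z)}\circ(e^{i\theta}\,\cdot)\circ\varphi_z.$$
A short algebraic manipulation shows that any such composition is again a Möbius self-map of $\D$ of the advertised form $e^{i\theta'}\dfrac{a-\zeta}{1-\ov a\,\zeta}$; conversely, each such Möbius map is an $\rho$-isometry (direct computation), so equality holds everywhere.

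The main obstacle is purely bookkeeping: one must identify the composition $\varphi_{f(z)}\circ(e^{i\theta}\,\cdot)\circ\varphi_z$ with the canonical normal form $e^{i\theta'}\varphi_a$. Beyond that, the argument is entirely a transfer of the classical Schwarz lemma through the disk automorphisms, which are themselves tailor-made to turn pseudohyperbolic distances into Euclidean moduli.
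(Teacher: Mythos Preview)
Your argument is correct and is exactly the approach the paper sketches: the paper's one-line proof simply names the auxiliary function $F:=S_{f(w)}\circ f\circ S_w$ (with $S_a=\varphi_a$) and defers to the classical Schwarz lemma, which is precisely what you do with the roles of $z$ and $w$ interchanged. Your write-up just spells out the inequality and the equality case that the paper leaves implicit.
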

\begin{proof}
This is an immediate corollary to the Schwarz lemma  (see e.g. \cite{ru})
 by considering the function 
$$F:=S_{f(w)}\circ f\circ S_w,$$
where for $a\in \D$,
$$S_a(z)= \frac{a-z}{1-\ov a z}$$
is the conformal automorphism of $\D$ interchanging $a$ with the origin.
\end{proof}

Consider now the set of all pseudohyperbolic disks $D_\rho(x,r)$, $x\in\; ]-1,1[$,
with fixed radius $r\in \;]0,1[$. In  studying the boundary behaviour
of holomorphic functions in the disk, 
 it is of interest to know whether the set $\Union_{x\in ]-1,1[} D_\rho(x,r)$ belongs
to a cone 
$$\Delta(\beta):=\Big\{z\in \D: \frac{|{\rm Im}\; z|}{1-{\rm Re}\, z}<\tan\beta\Big\}$$
with cusp at $z=1$ and angle $2\beta$ such that  $0<\beta<\pi/2$. 
A positive answer  is known among specialists in hyperbolic geometry. We never encountered a proof, though, available for function theorists. It is the aim of this note to provide such a proof.
For a nice introduction into the function theoretic aspects of the hyperbolic geometry,
see \cite{bm}.

\section{A union of hyperbolic disks}

\begin{figure}[h!]
   \hspace{0,3cm}
   \scalebox{.50} {\includegraphics{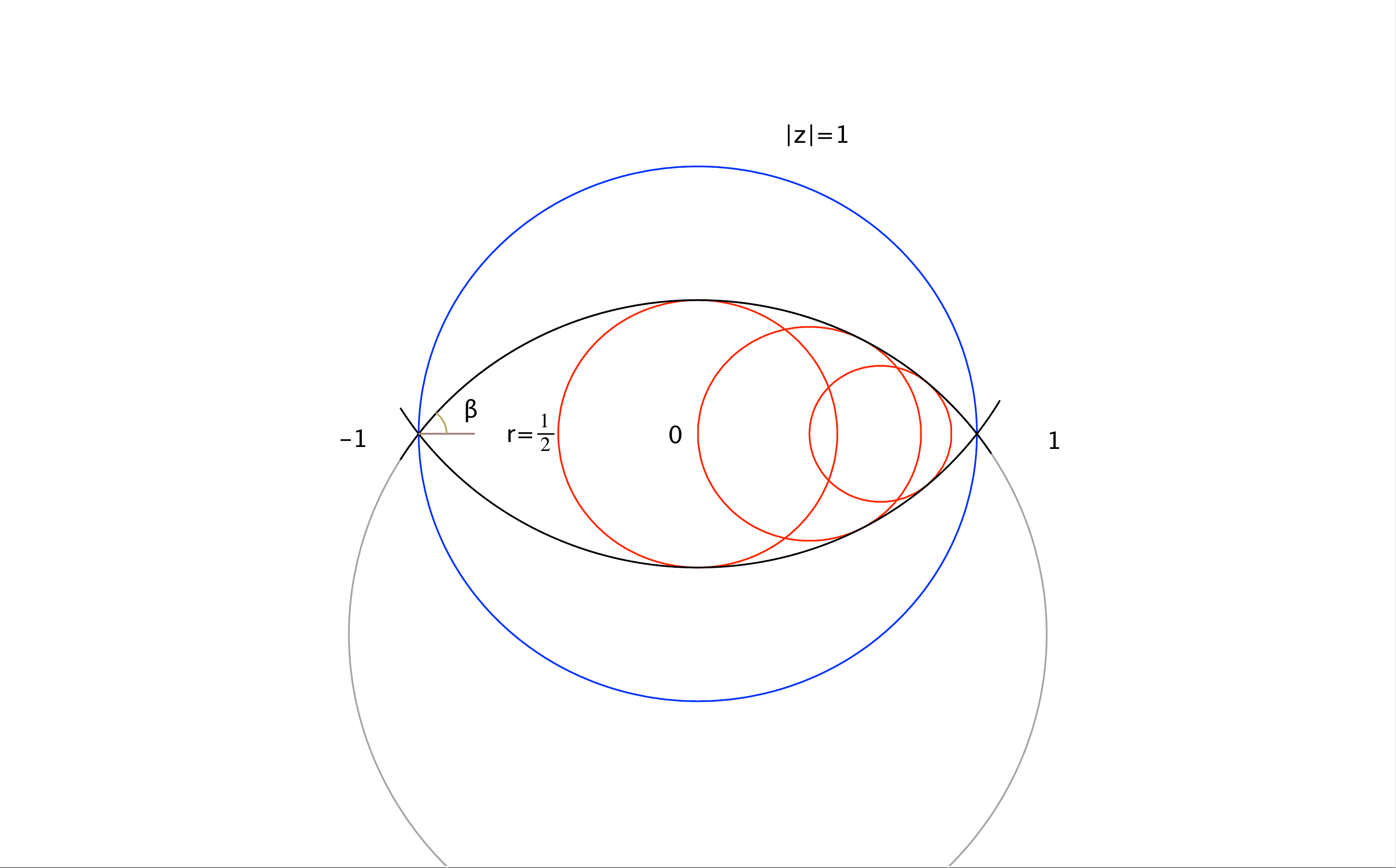}} 
\caption{\label{hyp3} The boundary of a union of hyperbolic disks with fixed radius}
\end{figure}

Here is the assertion we are going to prove.

\begin{theorem}\label{hyperbolic-disks}\hfill
\begin{enumerate}
\item[(1)] The upper boundary  $\mathscr C^+$ of $\dis\Union_{-1<x<1}D_\rho(x,r)$ is 
an arc of the circle
$$\mathfrak C:=\left\{w\in \C: \left|w+i \frac{1-r^2}{2r}\right|=\frac{1+r^2}{2r} \right \},$$
the lower boundary is its reflection with respect to the real axis (see figure \ref{hyp3}).

\item [(2)] The tangens of the angle $\beta$ under  which $\mathscr C^+$ cuts the real axis is 
$2r/(1-r^2)$.
\end{enumerate}
\end{theorem}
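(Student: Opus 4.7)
The explicit Euclidean description of $D_\rho(x,r)$ recalled in Section 2 turns the problem into an envelope computation for the one-parameter family of real-centred circles
\[
F(u,v,x) := (u-p(x))^2 + v^2 - R(x)^2 = 0,\qquad x\in(-1,1),
\]
with $p(x)=(1-r^2)x/(1-r^2x^2)$ and $R(x)=r(1-x^2)/(1-r^2x^2)$. My plan is to (i) produce a parametric form $x\mapsto(u(x),v(x))$ of the upper branch of this envelope from the standard system $F=\partial_x F=0$; (ii) verify by direct substitution that this parametric curve lies on the circle $\mathfrak C$; (iii) check that as $x$ runs over $(-1,1)$ the parametrisation sweeps the upper arc from $(-1,0)$ through $(0,r)$ to $(1,0)$ exactly once, and argue that this envelope is in fact the topological upper boundary; (iv) read off the tangent angle at $z=1$ from the geometry of $\mathfrak C$.

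For step (i), the condition $\partial_x F=0$ is the linear equation $(u-p(x))\,p'(x)+R(x)R'(x)=0$ for $u-p(x)$. After computing $p'(x)$ and $R(x)R'(x)$, algebraic cancellation together with the identity $(1-r^2)(1+r^2x^2)+2r^2(1-x^2)=(1+r^2)(1-r^2x^2)$ collapses the result to the clean form $u(x)=(1+r^2)x/(1+r^2x^2)$. Reinserting into $F=0$ and applying the factorisation $(1+r^2x^2)^2-4r^2x^2=(1-r^2x^2)^2$ yields $v(x)=r(1-x^2)/(1+r^2x^2)$ on the upper branch.

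For step (ii), I would compute $u(x)^2+\bigl(v(x)+(1-r^2)/(2r)\bigr)^2$ and show, using the same factorisation $(1-r^2x^2)^2+4r^2x^2=(1+r^2x^2)^2$, that it reduces to the constant $\bigl((1+r^2)/(2r)\bigr)^2$, whence the envelope lies on $\mathfrak C$. Step (iii) is immediate from monotonicity of $u$ and the boundary values $u(\pm 1)=\pm 1$ and $v(0)=r$. That the envelope genuinely bounds the union from above then follows from the standard envelope property (each $\partial D_\rho(x,r)$ is tangent to the curve at $(u(x),v(x))$), combined with the fact that the disks shrink continuously to the points $\{\pm 1\}$ as $x\to\pm 1$. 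For step (iv), the centre of $\mathfrak C$ is $-i(1-r^2)/(2r)$, so its radius vector at $z=1$ has slope $(1-r^2)/(2r)$; the tangent perpendicular to it has slope $-2r/(1-r^2)$, which equals $-\tan\beta$, the slope of the upper boundary of $\Delta(\beta)$ at the cusp $z=1$.

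The main obstacle will be the algebra in step (i): the raw envelope equation looks forbidding, but the two polynomial identities above collapse it to one-line formulas, after which the remaining verifications in (ii)--(iv) are short.
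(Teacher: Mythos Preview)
Your argument is correct: the envelope system $F=\partial_x F=0$ does collapse, via the two polynomial identities you isolate, to the parametrisation $u(x)=(1+r^2)x/(1+r^2x^2)$, $v(x)=r(1-x^2)/(1+r^2x^2)$, and steps (ii)--(iv) go through as described. (Step (iii) is a little brisk, but the missing point---that each $D_\rho(x,r)$, being the smaller of two tangent circles, lies inside $\mathfrak C$---is routine.)

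The paper, however, takes a genuinely different route. Rather than compute the envelope directly in $\D$, it pushes the whole family to the right half-plane via the Cayley map $\Psi(z)=(1+z)/(1-z)$. There each $D_\rho(x,r)$ becomes a Euclidean disk on the positive real axis whose centre $C_x$ and radius $R_x$ satisfy $R_x/C_x=2r/(1+r^2)$, \emph{independently of $x$}; a family of real-centred disks with constant ratio $R/C$ has as common tangents the two rays through the origin at angle $\beta$ with $\sin\beta=R/C$, so no envelope calculus is needed at all. Pulling these rays back by $\Psi^{-1}$ automatically gives circular arcs through $\pm 1$, and a short trigonometric argument recovers the centre, radius, and $\tan\beta$. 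The trade-off: your approach is self-contained real calculus, but the circularity of the envelope emerges as an algebraic accident; the paper's conformal approach explains it structurally (lines go to circles under M\"obius maps) and replaces your differentiation step by the single observation that $R_x/C_x$ is constant. The paper in fact remarks that a purely computational proof of your type appears in \cite{noel}.
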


 \begin{figure}[h!]
   \hspace{3cm} 
   \scalebox{.40} {\includegraphics{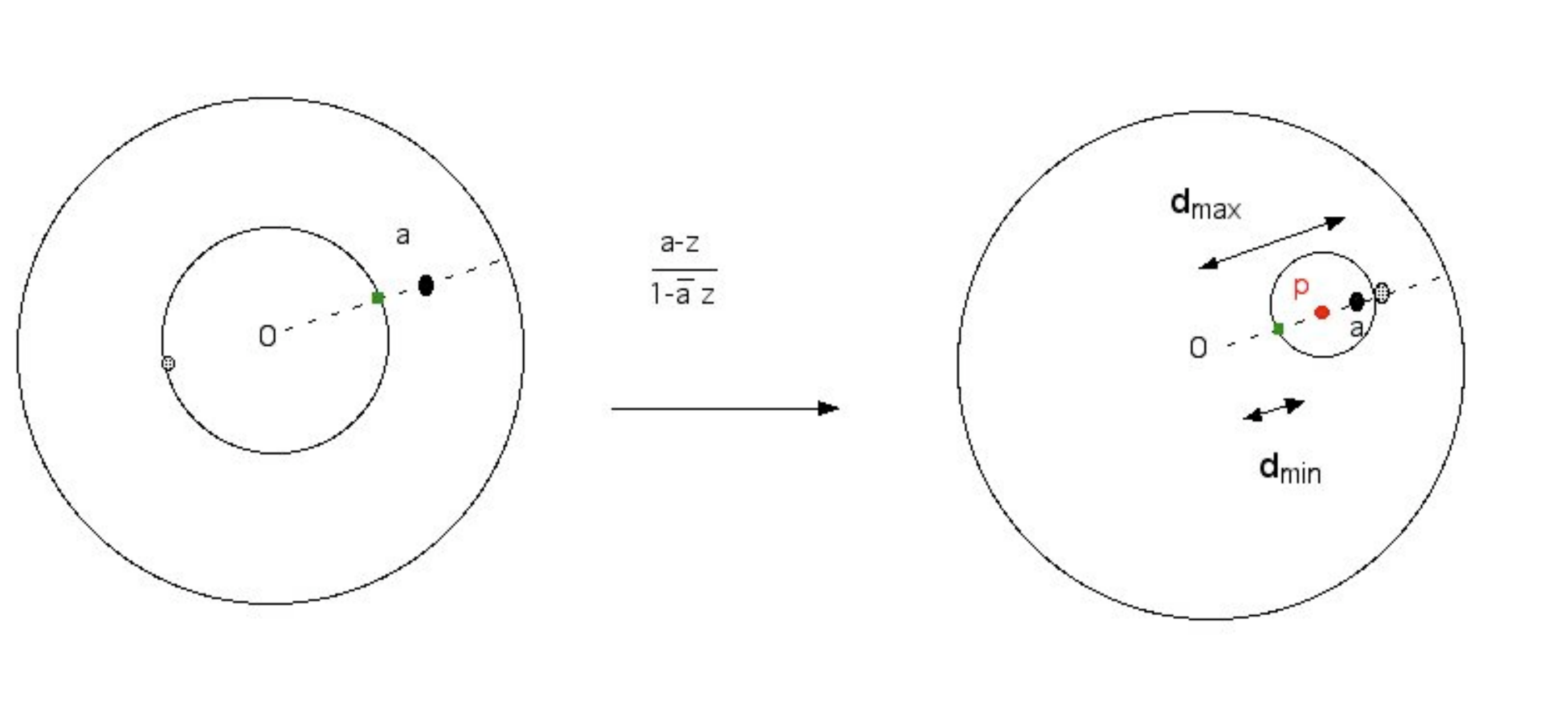}} 
\caption{\label{hyp1} Hyperbolic disks}
\end{figure}
\vspace{3cm}

Before we give our proof,  we  observe  that the largest distance $d_{max}$, respectively, if $|a|>r$, the smallest distance $d_{min}$,  of  a point in $D_\rho(a,r)$ to $0$ are given by
$$\mbox{$\dis d_{min}= \frac{|a|-r}{1-r|a|}$ \; and\; $\dis d_{max}=\frac{|a|+r}{1+r|a|}$}.$$

This can be seen by considering the conformal automorphism of the disk given by
$\varphi(z)=\frac{a-z}{1-\ov a z}$,  by noticing that the image of the disk
$D(0,r)=D_\rho(0,r)$ is the disk $D(p,R)$ and by calculating the images of the boundary points
$\pm r e^{i \arg a}$ which lie on the  half-line passing through $0$ and $a$ (see figure \ref{hyp1}). \medskip

\begin{proof}

(1) The proof  is best done via a conformal mapping  of $\D$ onto the right half-plane (see figure \ref{hyp5}).

\begin{figure}[h]
   \hspace{3cm}
   \scalebox{.30} {\includegraphics{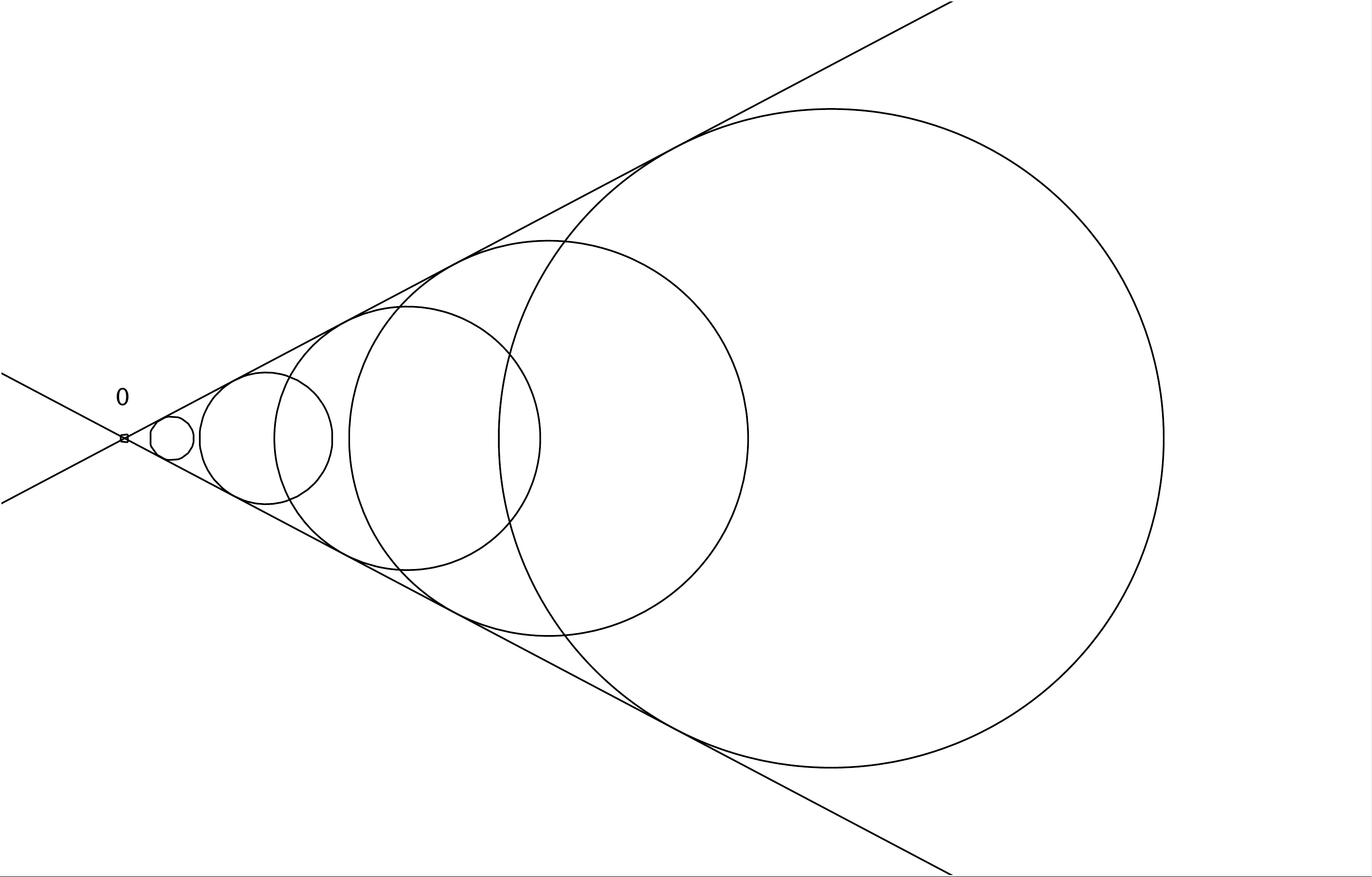}} 
\caption{\label{hyp5} The boundary of a union of hyperbolic disks with fixed radius in the right
half plane}
\end{figure}

Recall that if  $-1<x<1$, then the function $S_x$, given by $S_x(z)=(x-z)/(1-xz)$, maps the disk
$D(0,r)$  onto $D_\rho(x,r)$ with $x_m:=S_x(-r)=(x+r)/(1+xr)$ and 
$x_M:=S_x(r)=(x-r)/(1-xr)$.
Since $S_x$ maps $[-1,1]$ onto $[-1,1]$, and since the circle $D(0,r)$ cuts $[-1,1]$
at a right angle,  the angle invariance property of conformal maps implies that
$D_\rho(x,r)$ is the disk passing through the points $x_m$ and $x_M$ and orthogonal to $[-1,1]$.
Now we switch to the right half-plane $H$ by using  the map $\Psi(z):=(1+z)/(1-z)$ of $\D$ onto $H$. Then, by a similar reasoning, $K:=\Psi(D_\rho(x,r))$ is the disk orthogonal to the real axis and passing through
the points 
$$\mbox{$w_m:=\dis\Psi(x_m)= \frac{1-r}{1+r}\;\frac{1+x}{1-x}$ \;and\; $w_M:=\dis\Psi(x_M)= 
 \frac{1+r}{1-r}\;\frac{1+x}{1-x}$.}$$
 Hence the center $C_x$ of $K$ is the arithmetic mean $(w_M+ w_m)/2$ of $w_m$ and $w_M$,  
 and the radius $R_x$  is $(w_M- w_m)/2$. Thus
  $$\mbox{$\dis C_x=\frac{1+r^2}{1-r^2}\;\frac{1+x}{1-x}$ and 
  $\dis R_x=\frac{2r}{1-r^2}\;\frac{1+x}{1-x}$}.$$ 
Note that if the center $x$ of the pseudohyperbolic disk $D_\rho(x,r)$ runs through $]-1,1[$, then the center $C_x$ of the  Euclidean disk $\Psi(D_\rho(x,r))$ runs through $]0,\infty[$.
Due to conformal invariance,  the boundary $\mathscr C$ of $\Union_{-1<x<1}D_\rho(x,r)$ 
coincides with the preimage $\Psi^{-1}( \widetilde{\mathscr C})$ of the boundary 
$ \widetilde{\mathscr C}$ of 
$$S:=\Union_{-1<x<1} \Psi(D_\rho(x,r))=\Union_{-1<x<1}K(C_x,R_x).$$
We show that  $\widetilde{\mathscr C}$ is the union of the half-line $\{e^{i\beta}t: t\geq 0\}$
and its reflection $\{e^{-i\beta}t: t\geq 0\}$,  where $\beta$ is the angle whith  $\sin \beta= 2r/(1+r^2)$
(for a first glimpse, see figure \ref{hyp5}).

For a fixed $x\in\;]-1,1[$, consider in the first quadrant  the tangent $T$ 
to $K(C_x,R_x)$ that passes through the origin. Let
$\beta_x$ be its angle with respect to the real axis.  Then 
$$\sin\beta_x=\frac{R_x}{C_x}=\frac{2r}{1+r^2}.$$
This is independent of $x$. Hence $T$ is  a joint tangent to all the Euclidean disks $K(C_x,R_x)$.
In other words,  $S$ is contained in the infinite triangle $\Delta$ formed by $T$ and its reflection.  
To show that $\Delta=S$,  we need to prove that every  point  on $T$ is the tangent point of
some of the disks $K(C_a,R_a)$ with $-1<a<1$. 
To this end, let $P$ be the  point  on $T$ whose distance to $0$ is $t$ and let $T_t$ be the line orthogonal to $T$ and passing
through $P$. Then $T_t$ cuts the real line at a point $x_t$. The unique disk $K$  centered at $x_t$ 
and having $P$ as its tangent point to $T$ has center $C(t)$ and radius $R(t)$, which are given by 
(see figure \ref{hyp6})
$$\mbox{$\dis C(t)=x_t$ and $\dis R(t)=x_t \sin \beta=x_t \;2r/(1+r^2)$}.$$

\begin{figure}[h]
   \hspace{2cm}
   \scalebox{.50} {\includegraphics{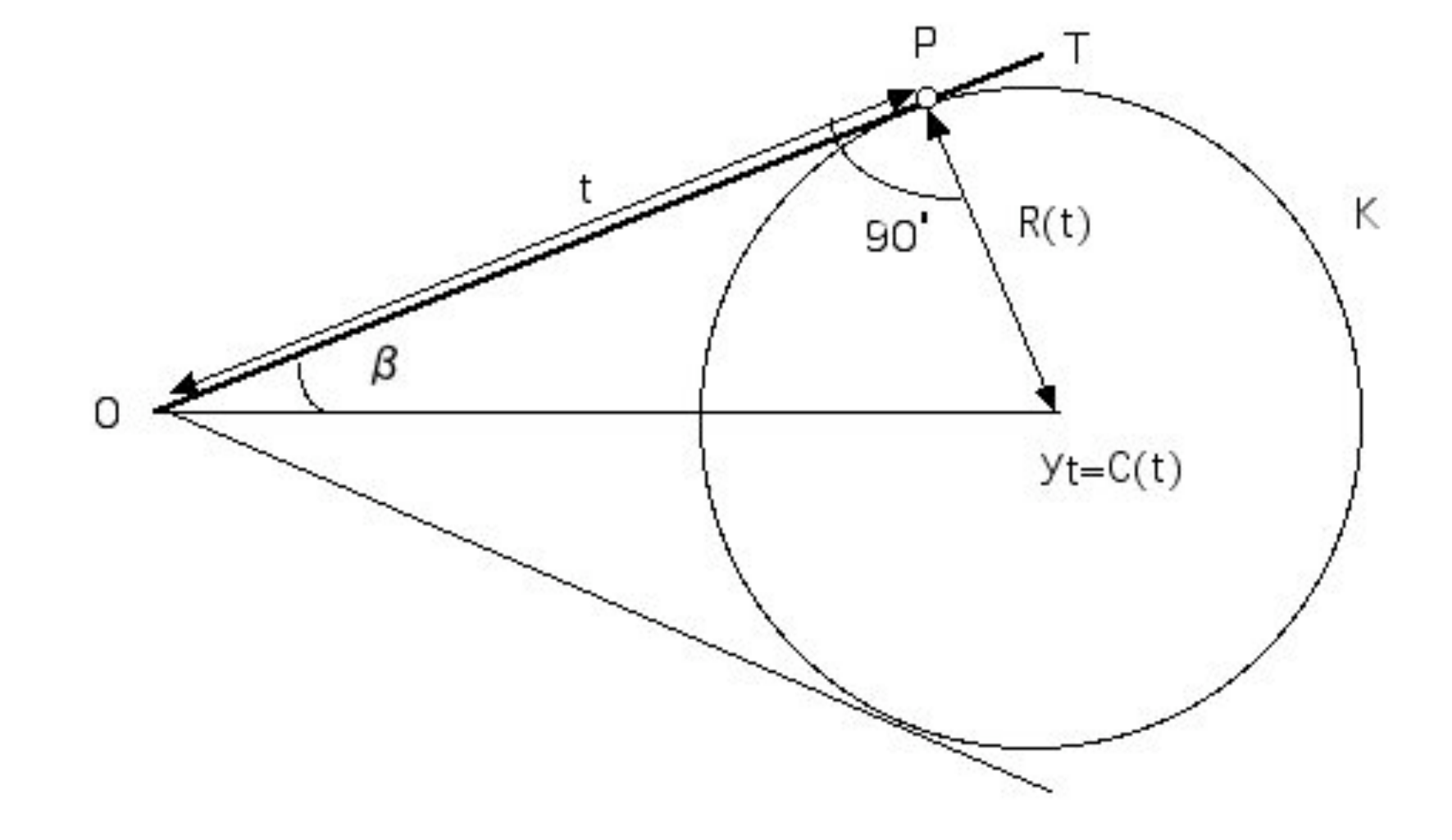}} 
\caption{\label{hyp6} The tangent $T$}
\end{figure}

Now 
$$x_t=\frac{t}{\cos\beta}=\frac{t}{\sqrt{1-\sin^2\beta}}=t\;\frac{1+r^2}{1-r^2}.$$
But $t=(1+a)(1-a)$ for a unique $a\in \;]-1,1[$. Thus 
$$x_t=\frac{1+a}{1-a}\frac{1+r^2}{1-r^2}=C_a$$ and 
$$R(t)=x_t \;2r/(1+r^2)= \frac{1+a}{1-a} \frac{2r}{1-r^2}=R_a.$$
We conclude that $$K=\Psi( D_\rho(a, r))=K(C_a,R_a).$$

 \begin{figure}[h!]
   \hspace{3cm}
   \scalebox{.50} {\includegraphics{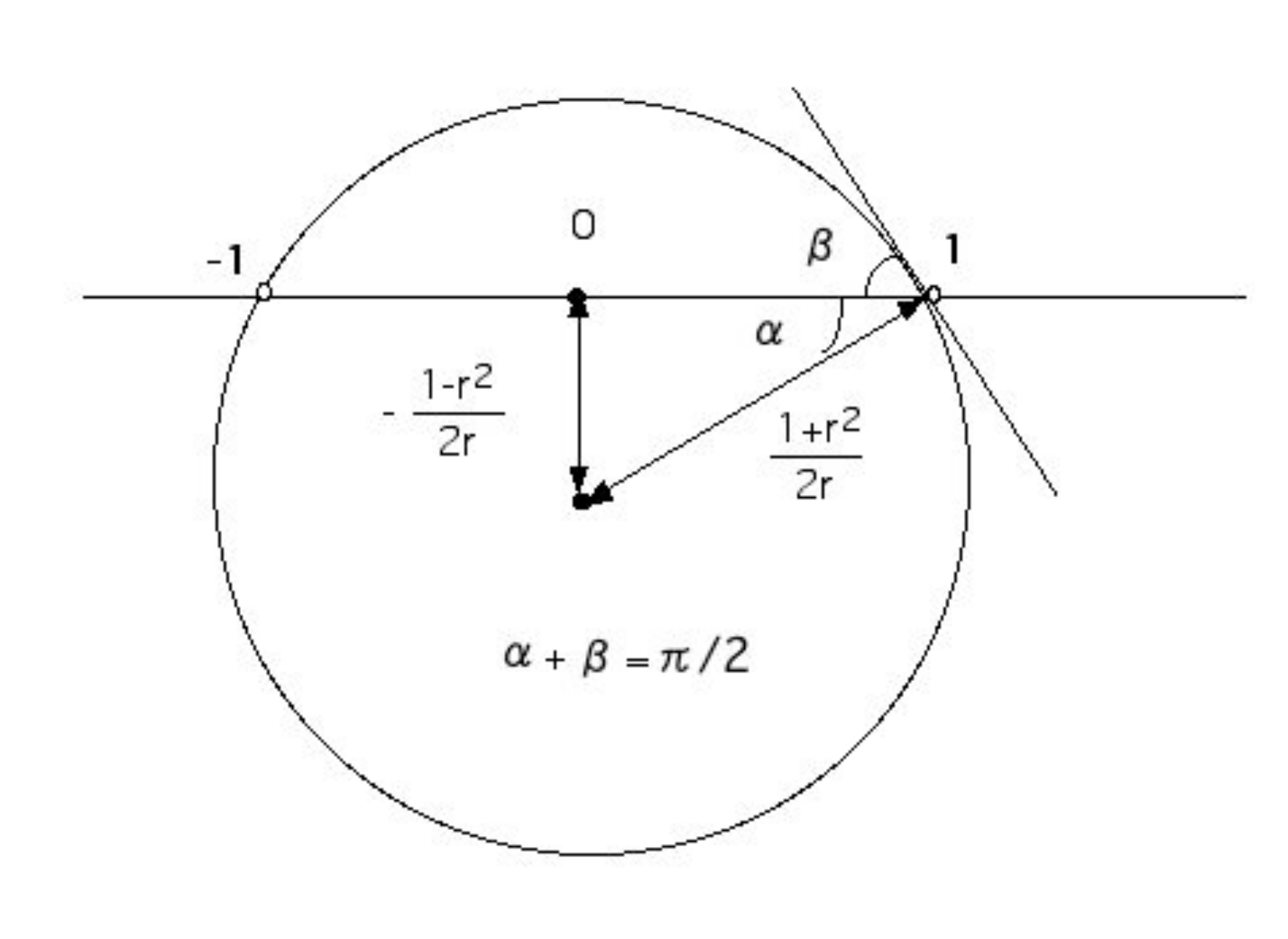}} 
\caption{\label{hyp4} The angle $\beta$}
\end{figure}

By moving back from the right half-plane to the unit disk, we see that $\mathscr C^+=\Psi^{-1}(T)$
is an arc of  a circle $\mathfrak C$ which passes  through $-1$ and $1$ and cuts twice  the axis $[-1,1]$ under the
 the angle $\beta$ with  $\sin\beta= 2r/(1+r^2)$.  Using figure \ref{hyp4}, we
 then deduce that the radius $R$ of $\mathfrak C$ coincides with the hipotenusa of the 
 displayed triangle and so  
 $$R= \frac{1}{\cos \alpha}=\frac{1}{\sin(\frac{\pi}{2}-\alpha)}=
\frac{1}{\sin\beta}=\frac{1+r^2}{2r}.$$
 This implies that the center $C$ of $\mathfrak C$ is given by
 $$C= -iR \sin \alpha=  -i\frac{1+r^2}{2r} \sqrt{1-\cos^2 \alpha}= -i\frac{1+r^2}{2r}\; 
 \frac{1-r^2}{1+r^2}=- i\frac{1-r^2}{2r}.$$

(2) $\tan\beta= \sin (\pi/2-\alpha)/ \cos(\pi/2-\alpha)=\cos\alpha/ \sin\alpha$ with 
$$\mbox{$\dis \sin\alpha=\frac{\frac{1-r^2}{2r}}{\frac{1+r^2}{2r}}= \frac{1-r^2}{1+r^2}$
and $ \cos \alpha= \dis\frac{1}{\frac{1+r^2}{2r}}=\frac{2r}{1+r^2}$}.$$
Hence  $\tan\beta= 2r/(1-r^2)$ (see figure \ref{hyp4}).

 \end{proof}

A purely computational proof can be found in \cite[Appendix]{noel}. There it is also shown
that the  Euclidean length of $\mathscr C^+$ is $2\frac{1+r^2}{r} \arctan r$, and that the surface
enclosed by $\Union_{-1<x<1} D_\rho(x,r)$ has Euclidean measure
$\left( \frac{1+r^2}{r}\right)^2 \arctan r - \frac{1-r^2}{r}$.

\vspace{-0,6cm}


\begin{thebibliography}{99}
 
 \bibitem{an} J.W. Anderson,
\emph{Hyperbolic geometry},
 2nd ed.
Springer Undergraduate Mathematics Series, Springer, London (2005).
 

 
 \bibitem {bm} A.F. Beardon, D. Minda,
\emph{The hyperbolic metric and geometric function theory},
(ed) S. Ponnusamy,  T. Sugawa and M. Vuorinen,  Proceedings of the international workshop on quasiconformal mappings and their applications, December 27, 2005--January 1, 2006. New Delhi: Narosa Publishing House (ISBN 81-7319-807-1/hbk). 9-56 (2007).

 \bibitem{cfkp} J.W. Cannon, W.J.  Floyd, R.  Kenyon, W.R.  Parry,
\emph {Hyperbolic geometry},  in:
Levy, Silvio (ed.), Flavors of geometry. Cambridge: Cambridge University Press. Math. Sci. Res. Inst. Publ. 31, 59-115 (1997).
 


 \bibitem{noel} J. No\"el, 
   \emph{Structures alg\'ebriques dans les anneaux fonctionnels},
   Th\`ese de doctorat, LMAM, Universit\'e de Lorraine, Metz, 2012.\\
  {\href {http://docnum.univ-lorraine.fr/public/DDOC\_T\_2012\_0222\_NOEL.pdf}
{\textcolor{blue}{http://docnum.univ-lorraine.fr/public/DDOC\_T\_2012\_0222\_NOEL.pdf}}}


\bibitem{ru} W. Rudin \emph{Functional Analysis}, Second Edition, MacGraw-Hill, 1991.


\bibitem{ut} {\href{http://en.wikipedia.org/wiki/Uniform\_tilings\_in\_hyperbolic\_plane}
{\textcolor{blue}{http://en.wikipedia.org/wiki/Uniform\_tilings\_in\_hyperbolic\_plane}}}

\end{thebibliography}
 \end{document}